\newcounter{alphthm}
\theoremstyle{plain}
\newtheorem{thm}{Theorem}[section]
 \newtheorem{exam}[thm]{Example}
 \theoremstyle{definition}
 \newtheorem{defn}[thm]{Definition}
 \theoremstyle{remark}
 \newtheorem{rem}[thm]{\bf Remark}
 \numberwithin{equation}{subsection}
\begin{document}

\title {Kannan Fixed-Point Theorem On Complete Metric Spaces And On Generalized Metric Spaces Depended an
Another Function \footnote{2000 {\it Mathematics Subject
Classification}:
 Primary 46J10, 46J15, 47H10.} }

\author{ S. Moradi \\\\
Faculty  of Science, Department of Mathematics\\
Arak University, Arak,  Iran\\
\date{}
 }
 \maketitle
\begin{abstract}
We obtain sufficient conditions for existence of unique fixed
point of Kannan type mappings on complete metric spaces and on
generalized complete metric spaces depended an another function.
\end{abstract}

\textbf{Keywords:} Fixed point, contractive mapping, sequently
convergent, subsequently convergent.

\section{Introduction}

The fixed point theorem most be frequently cited in Banach
condition mapping principle (see [4] or [6]), which asserts that
if $(X,d)$ is a complete metric space and $S:X \longrightarrow X$
is a contractive mapping ($S$ is contractive if there exists $k
\in [0,1)$ such that for all $x,y \in X$, $d(Sx,Sy) \leq
kd(x,y)$) then $S$ has a unique fixed point.\\
In 1968 [5] Kannan established a fixed point theorem for mapping
satisfying:
\[ d(Sx,Sy) \leq \lambda\big{[}d(x,Sx)+d(y,Sy)\big{]}, \hspace{7cm} (1)\]
for all $x,y \in X$, where $\lambda \in [0,\frac{1}{2})$.\\
Kannan's paper [5] was followed by a spate of papers containing a
variety of contractive definitions in metric spaces.\\
Rhoades [7] in 1977 considered 250 type of contractive
definitions and analyzed the relationship among them.\\
In 2000 Branciari [3] introduced a class of generalized metric
spaces by replacing triangular inequality by similar ones which
involve four or more points instead of three and improved Banach
contraction mapping principle.\\
Recently Azam and Arshad [1] in 2008 extended the Kannan's
theorem for this kind of generalized metric spaces.\\
In 2009 [2] A. Beiranvand, S. Moradi, M. Omid and H. Pazandeh
introduced new classes of contractive functions and established
the Banach contractive principle.\\
In the present paper at first we extend the Kannan's theorem [5]
and then extend the theorem due to Azam and Arshad [1] for these
new classes of functions.

From the main results we need some new definitions.

\begin{defn}$[2]$
 Let $(X,d)$ be a metric space. A mapping $T:X \longrightarrow X$
 is said sequentially convergent if we have, for every sequence
 $\{y_{n}\}$, if $\{Ty_{n}\}$ is convergence then $\{y_{n}\}$ also is
 convergence. $T$ is said subsequentially convergent if we have, for
 every sequence $\{y_{n}\}$, if $\{Ty_{n}\}$ is convergence then
 $\{y_{n}\}$ has a convergent subsequence.
\end{defn}

\begin{defn}$([1]$ or $[3])$
Let $X$ be a nonempty set. Suppose that the mapping $d:X
\longrightarrow X$, satisfies:\\
(i) $d(x,y) \geq 0$, for all $x,y \in X$ and $d(x,y)=0$ if and
only if $x=y$;\\
(ii)$d(x,y)=d(y,x)$ for all $x,y \in X$;\\
(iii)$d(x,y) \leq d(x,w)+d(w,z)+d(z,y)$ for all $x,y \in X$ and
for all distinct points $w,z \in X \backslash
\{x,y\}$[rectangular property].\\
Then $d$ is called a generalized metric and $(X,d)$ is a
generalized metric space.
\end{defn}

For more information can see [1] and [3].

\section{Main Results}

In this section at first we extend the Kannan's theorem [5] and
then extend the Azam and Arshad theorem [1].

\begin{thm}$[$Extended Kannan's Theorem$]$
 Let $(X,d)$ be a complete metric space and  $T,S:X \longrightarrow
 X$ be mappings such that $T$ is continuous,  one-to-one and  subsequentially
 convergent. If $\lambda \in [0,\frac{1}{2})$ and
 \[ d(TSx,TSy) \leq \lambda \big{[}d(Tx,TSx)+d(Ty,TSy)\big{]}, \hspace{5cm} (2)\]
 for all $x,y \in X$, then $S$ has a unique fixed point. Also if
 $T$ is sequentially convergent then for every $x_{0} \in X$ the
 sequence of iterates $\{S^{n}x_{0}\}$ converges to this fixed
 point.
\end{thm}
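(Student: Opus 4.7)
The plan is to mimic the standard Kannan iteration argument, but carried out on the image sequence $\{TS^{n}x_{0}\}$ rather than on $\{S^{n}x_{0}\}$ itself, because the contractive hypothesis \eqref{} controls distances only after applying $T$. The one-to-one, continuity and (sub)sequential convergence hypotheses on $T$ are exactly what is needed to transfer convergence information back from $\{TS^{n}x_{0}\}$ to $\{S^{n}x_{0}\}$.

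First I would fix $x_{0}\in X$, set $x_{n}=S^{n}x_{0}$, and apply (2) with $x=x_{n}$, $y=x_{n-1}$. Writing $a_{n}=d(Tx_{n},Tx_{n+1})$, this gives $a_{n}\le \lambda(a_{n}+a_{n-1})$, hence $a_{n}\le h\,a_{n-1}$ with $h=\lambda/(1-\lambda)\in[0,1)$, since $\lambda<1/2$. A routine geometric-series estimate then shows $\{Tx_{n}\}$ is Cauchy in $(X,d)$, so by completeness $Tx_{n}\to u$ for some $u\in X$. Because $T$ is subsequentially convergent, $\{x_{n}\}$ admits a subsequence $x_{n_{k}}\to v$, and continuity of $T$ forces $Tv=u$.

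Next I would prove $v$ is a fixed point of $S$. Applying (2) with $x=v$, $y=x_{n_{k}}$ gives
\[
d(TSv,TSx_{n_{k}})\ \le\ \lambda\bigl[d(Tv,TSv)+d(Tx_{n_{k}},Tx_{n_{k}+1})\bigr].
\]
The term $d(Tx_{n_{k}},Tx_{n_{k}+1})=a_{n_{k}}\to 0$, while $TSx_{n_{k}}=Tx_{n_{k}+1}\to u=Tv$. Passing to the limit yields $d(TSv,Tv)\le\lambda\,d(Tv,TSv)$, so $d(TSv,Tv)=0$, i.e.\ $TSv=Tv$; injectivity of $T$ then gives $Sv=v$. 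Uniqueness is immediate: any two fixed points $v,w$ satisfy $d(Tv,Tw)=d(TSv,TSw)\le\lambda[d(Tv,TSv)+d(Tw,TSw)]=0$, so $Tv=Tw$ and hence $v=w$. Finally, if $T$ is sequentially convergent, the Cauchyness of $\{Tx_{n}\}$ already forces $\{x_{n}\}$ itself to converge, and its limit must coincide with the unique fixed point $v$.

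The only genuinely delicate step is the second one: one must be careful that the contractive estimate is used on the pair $(v,x_{n_{k}})$ and that $TSx_{n_{k}}$ is recognized as $Tx_{n_{k}+1}$, so that convergence of $\{Tx_{n}\}$ (not just of the subsequence $\{Tx_{n_{k}}\}$) can be invoked; without this, one could only conclude that $TSv$ is a limit of a subsequence and the argument would stall. Everything else is routine manipulation of the inequality and uses of continuity/injectivity of $T$.
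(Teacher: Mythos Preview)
Your proof is correct and follows essentially the same route as the paper: show $\{Tx_n\}$ is Cauchy via the geometric decay $a_n\le h\,a_{n-1}$ with $h=\lambda/(1-\lambda)$, use subsequential convergence and continuity of $T$ to produce a candidate fixed point, verify it is fixed via (2) and injectivity of $T$, and handle uniqueness and the sequentially convergent case just as you do. The only cosmetic difference is in the fixed-point verification: the paper splits $d(TSu,Tu)$ by the triangle inequality through $TS^{n(k)}x_0$ and $TS^{n(k)+1}x_0$ and bounds each piece, whereas you apply (2) directly to the pair $(v,x_{n_k})$ and pass to the limit---your version is slightly cleaner but the underlying idea is the same.
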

\begin{proof}
Let $x_{0}$ be and arbitrary point in $X$. We define the
iterative sequence $\{x_{n}\}$ by $x_{n+1}=Sx_{n}$ (equivalently,
$x_{n}=S^{n}x_{0}$), $n=1,2,...$. Using the inequality (2), we
have
\begin{eqnarray*}
d(Tx_{n},Tx_{n+1})&=&d(TSx_{n-1},TSx_{n})\\
&\leq& \lambda \big{[}d(Tx_{n-1},TSx_{n-1})+d(Tx_{n},TSx_{n})
\big{]}, \hspace{3cm} (3)
\end{eqnarray*}
so,
\[
d(Tx_{n},Tx_{n+1}) \leq \frac{\lambda}{1-\lambda}
d(Tx_{n-1},Tx_{n}). \hspace{6.7cm} (4)\] By the same argument,
\begin{eqnarray*}
d(Tx_{n},Tx_{n+1}) &\leq& \frac{\lambda}{1-\lambda}
d(Tx_{n-1},Tx_{n}) \leq (\frac{\lambda}{1-\lambda})^{2}
d(Tx_{n-2},Tx_{n-1})\\
 &\leq& ... \leq (\frac{\lambda}{1-\lambda})^{n} d(Tx_{0},Tx_{1}). \hspace{5.2cm}
(5)
\end{eqnarray*}
By (5), for every $m,n \in \Bbb{N}$ such that $m>n$ we have,
\begin{eqnarray*}
d(Tx_{m},Tx_{n}) &\leq&
d(Tx_{m},Tx_{m-1})+d(Tx_{m-1},Tx_{m-2})+...+ d(Tx_{n+1},Tx_{n})\\
&\leq& \big{[}(\frac{\lambda}{1-\lambda})^{m-1}+
(\frac{\lambda}{1-\lambda})^{m-2}+...+
(\frac{\lambda}{1-\lambda})^{n}\big{]}
d(Tx_{0},Tx_{1}) \\
 &\leq& \big{[} (\frac{\lambda}{1-\lambda})^{n}+(\frac{\lambda}{1-\lambda})^{n+1}+...
 \big{]}d(Tx_{0},Tx_{1})\\
 &=& (\frac{\lambda}{1-\lambda})^{n} \frac{1}{1-(\frac{\lambda}{1-\lambda})} d(Tx_{0},Tx_{1}). \hspace{4.5cm}
(6)
\end{eqnarray*}
Letting $m,n \longrightarrow \infty$ in (6), we have $\{Tx_{n}\}$
is a Cauchy sequence, and since $X$ is a complete metric space,
there exists $v \in X$ such that
\[ \underset{n \rightarrow \infty}{\lim}Tx_{n}=v. \hspace{11cm} (7)\]
Since $T$ is a subsequentially convergent, $\{x_{n}\}$ has a
convergent subsequence. So there exists $u\in X$ and $\{x_{n(k)}\}
_{k=1}^\infty$ such that  $\underset {k \rightarrow \infty}{\lim}
x_{n(k)}=u$.\\
Since $T$ is continuous and $\underset {k \rightarrow
\infty}{\lim} x_{n(k)}=u$, $\underset {k \rightarrow
\infty}{\lim} Tx_{n(k)}=Tu$.\\
By (7) we conclude that $Tu=v$. So
\begin{eqnarray*}
d(TSu,Tu) &\leq&
d(TSu,TS^{n(k)}x_{0})+d(TS^{n(k)}x_{0},TS^{n(k)+1}x_{0})+ d(TS^{n(k)+1}x_{0},Tu)\\
&\leq& \lambda
\big{[}d(Tu,TSu)+d(TS^{n(k)-1}x_{0},TS^{n(k)}x_{0})\big{]}\\
&+&(\frac{\lambda}{1-\lambda})^{n(k)}d(TSx_{0},Tx_{0})+d(Tx_{n(k)+1},Tu)\\
&=& \lambda d(Tu,TSu)+ \lambda
d(Tx_{n(k)-1},Tx_{n(k)})\\
&+&(\frac{\lambda}{1-\lambda})^{n(k)}d(Tx_{1},Tx_{0})+d(Tx_{n(k)+1},Tu),
\hspace{3cm} (8)
\end{eqnarray*}
hence,
\begin{eqnarray*}
d(TSu,Tu) &\leq&
\frac{\lambda}{1-\lambda}d(Tx_{n(k)-1},Tx_{n(k)})+\frac{1}{1-\lambda}(\frac{\lambda}{1-\lambda})^{n(k)}d(Tx_{1},Tx_{0})\\
&+&\frac{1}{1-\lambda}d(Tx_{n(k)+1},Tu) \underset{k \rightarrow
\infty }\longrightarrow 0. \hspace{5cm} (9)
\end{eqnarray*}
Therefore $d(TSu,Tu)=0$.\\
Since $T$ is one-to-one $Su=u$. So $S$ has a fixed point.\\
Since (2) holds and $T$ is one-to-one, $S$ has a unique fixed
point.\\
Now if $T$ is sequentially convergent, by replacing $\{n\}$ with
$\{n(k)\}$ we conclude that $\underset{n \rightarrow
\infty}{\lim}x_{n}=u$ and this shows that $\{x_{n}\}$ converges
to the fixed point of $S$.
\end{proof}

\begin{rem}
By taking $Tx \equiv x$ in Theorem 2.1, we can conclude the
Kannan's theorem[5].
\end{rem}
The following example shows that Theorem 2.1 is indeed a proper
extension on Kannan's theorem.
\begin{exam}
Let $X=\{0\}\cup \{\frac{1}{4},\frac{1}{5},\frac{1}{6},...\}$
endowed with the Euclidean metric. Define $S:X \longrightarrow X$
by $S(0)=0$ and $S(\frac{1}{n})=\frac{1}{n+1}$ for all $n \geq
4$. Obviously the condition (1) is not true for every $\lambda >
0$. So we can not use the Kannan's theorem [5]. By define $T:X
\longrightarrow X$ by $T(0)=0$ and
$T(\frac{1}{n})=\frac{1}{n^{n}}$ for all $n \geq 4$ we have, for
$m,n \in \Bbb{N}$ ($m > n$),
\begin{eqnarray*}
|TS(\frac{1}{m})-TS(\frac{1}{n})|&=&\frac{1}{(n+1)^{n+1}}-\frac{1}{(m+1)^{m+1}}\\
&<& \frac{1}{(n+1)^{n+1}} \leq \frac{1}{3}
\big{[}\frac{1}{n^{n}}-\frac{1}{(n+1)^{n+1}}\big{]}\\
&\leq& \frac{1}{3}
\big{[}\frac{1}{n^{n}}-\frac{1}{(n+1)^{n+1}}+\frac{1}{m^{m}}-\frac{1}{(m+1)^{m+1}}\big{]}\\
&=&\frac{1}{3}\big{[}
|T(\frac{1}{n})-TS\frac{1}{n}|+|T(\frac{1}{m})-TS\frac{1}{m}|\big{]}.
\hspace{2.5cm} (10)
\end{eqnarray*}
The inequality (10) shows that (2) is true for $\lambda
=\frac{1}{3}$. Therefore by Theorem 2.1 $S$ has a unique fixed
point.
\end{exam}

In the following theorem we extend the Azam and Arshad theorem
[1].

\begin{thm}
 Let $(X,d)$ be a complete generalizes metric space and  $T,S:X \longrightarrow
 X$ be mappings such that $T$ is continuous,  one-to-one and  subsequentially
 convergent. If $\lambda \in [0,\frac{1}{2})$ and
 \[ d(TSx,TSy) \leq \lambda \big{[}d(Tx,TSx)+d(Ty,TSy)\big{]}, \hspace{5cm} (11)\]
 for all $x,y \in X$, then $S$ has a unique fixed point. Also if
 $T$ is sequentially convergent then for every $x_{0} \in X$ the
 sequence of iterates $\{S^{n}x_{0}\}$ converges to this fixed
 point.
\end{thm}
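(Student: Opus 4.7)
The plan is to follow the template of Theorem 2.1, systematically replacing uses of the triangle inequality by the rectangular property~(iii) in the definition of generalized metric. Starting from $x_0 \in X$ arbitrary and $x_{n+1} = Sx_n$, the inequality (11) is formally identical to (2), so the same calculation that produced (3)--(5) gives
\[ d(Tx_n, Tx_{n+1}) \leq h^n\, d(Tx_0, Tx_1), \qquad h := \tfrac{\lambda}{1-\lambda} \in [0,1). \]
Applying (11) instead with $(x,y) = (x_{n-1}, x_{n+1})$ produces the secondary estimate
\[ d(Tx_n, Tx_{n+2}) \leq \lambda \bigl[d(Tx_{n-1}, Tx_n) + d(Tx_{n+1}, Tx_{n+2})\bigr], \]
which by the first bound also decays geometrically in $n$. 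This two-step estimate has no counterpart in the proof of Theorem 2.1 but is forced here, because the rectangular property involves \emph{four} points rather than three.

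Next I would verify that the iterates $\{x_n\}$ may be assumed pairwise distinct --- a coincidence $x_n = x_m$ with $n < m$ produces a fixed point by a standard short argument. Under this assumption, injectivity of $T$ makes $\{Tx_n\}$ pairwise distinct as well, and the rectangular property can be iterated: for $m \geq n+3$, split
\[ d(Tx_n, Tx_m) \leq d(Tx_n, Tx_{n+1}) + d(Tx_{n+1}, Tx_{n+2}) + d(Tx_{n+2}, Tx_m), \]
and repeat on the last term, reducing the gap by $2$ at each step, until one is left with either $d(Tx_{m-1}, Tx_m)$ or $d(Tx_{m-2}, Tx_m)$ according to the parity of $m-n$. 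Summing the geometric tails from the two decay estimates above shows that $\{Tx_n\}$ is a Cauchy sequence, and completeness then produces $v \in X$ with $Tx_n \to v$.

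The conclusion follows as in Theorem 2.1. Subsequential convergence of $T$ yields a subsequence $x_{n(k)} \to u$, and continuity of $T$ forces $Tu = v$. To prove $Su = u$, I would bound $d(TSu, Tu)$ using the rectangular property with intermediate points $Tx_{n(k)}$ and $Tx_{n(k)+1}$, apply (11) to the $d(TSu, Tx_{n(k)})$ term, use the geometric decays on the remaining terms, and let $k \to \infty$ exactly as in (8)--(9); this yields $d(TSu, Tu) = 0$, hence $Su = u$ by injectivity of $T$. Uniqueness follows by applying (11) to any two fixed points, and the final statement about $\{S^n x_0\}$ when $T$ is sequentially convergent is identical to that of Theorem 2.1.

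The main obstacle I expect is the Cauchy step: the usual telescoping bound from the triangle inequality is unavailable, so one must simultaneously maintain two geometric decay estimates (for $d(Tx_n, Tx_{n+1})$ and for $d(Tx_n, Tx_{n+2})$), handle even and odd gaps $m-n$ separately, and carefully justify the distinctness conditions demanded by the rectangular property at every split.
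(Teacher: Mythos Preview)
The paper actually leaves the proof of this theorem blank: the \texttt{proof} environment following the statement is empty, so there is nothing to compare your argument against directly. Presumably the intended proof is the obvious adaptation of the argument for Theorem~2.1, with the triangle inequality replaced by the rectangular property --- which is exactly the strategy you outline.

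Your proposal is correct and identifies precisely the new ingredients that the generalized-metric setting forces: the auxiliary two-step estimate for $d(Tx_n,Tx_{n+2})$, the need to first dispose of the case where the iterates are not pairwise distinct, and the parity split in the Cauchy argument. These are the standard moves in the Branciari--Azam--Arshad line of results, and you have located them correctly. One small point worth making explicit when you write it out in full: in the final step, the rectangular property applied to $d(TSu,Tu)$ with intermediate points $Tx_{n(k)}$ and $Tx_{n(k)+1}$ requires these two points to be distinct from each other and from both $TSu$ and $Tu$. Distinctness from each other follows from your pairwise-distinctness reduction and the injectivity of $T$; distinctness from $TSu$ and $Tu$ holds for all sufficiently large $k$ (if, say, $Tx_{n(k)}=Tu$ infinitely often, injectivity gives $x_{n(k)}=u$, and then one step of the iteration together with (11) already forces $Su=u$). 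This is routine but should be said, since the rectangular inequality is simply false without the distinctness hypothesis.
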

\begin{proof}

\end{proof}

\begin{rem}
By taking $Tx \equiv x$ in Theorem 2.4, we can conclude the Azam
and Arshad theorem [1].
\end{rem}

The following example shows that Theorem 2.4 is indeed a proper
extension on Azam and Arshad theorem.
\begin{exam}$[1]$
Let $X=\{1,2,3,4\}$. Define $d:X\times X \longrightarrow \Bbb{R}$
as follows:\\
$d(1,2)=d(2,1)=3,$\\
$d(2,3)=d(3,2)=d(1,3)=d(3,1)=1,$\\
$d(1,4)=d(4,1)=d(2,4)=d(4,2)=d(3,4)=d(4,3)=4$.\\
Obviously $(X,d)$ is a generalized metric space and is not a
metric space.\\
Now define a mapping $S:X \longrightarrow X$ as follows:
$$Sx= \left\{\begin{array} {ll}
2 & ; x \neq 1\\
4& ; x=1\\
\end{array}\right. $$
Obviously the inequality (1) is not holds for $S$ for every
$\lambda \in [0,\frac{1}{2})$. So we can not use the Azam and
Arshad theorem for $S$.\\
By define $T:X \longrightarrow X$ by:
$$Tx= \left\{\begin{array} {ll}
2 & ; x=4\\
3& ; x=2\\
4& ; x=1\\
1& ; x=3\\
\end{array}\right. $$
we have
$$TSx= \left\{\begin{array} {ll}
3 & ; x \neq 1\\
2& ; x=1\\
\end{array}\right. $$
We can show that
\[ d(TSx,TSy) \leq \frac{1}{3} \big{[}d(Tx,TSx)+d(Ty,TSy)\big{]}. \hspace{5cm} (12)\]
Therefore by Theorem 2.4, $S$ has a unique fixed point.
\end{exam}


Email:

S-Moradi@araku.ac.ir

\end{document}